\begin{document}

%\markboth{Authors' Names}
%{Instructions for Typesetting Camera-Ready Manuscripts}

%%%%%%%%%%%%%%%%%%%%% Publisher's Area please ignore %%%%%%%%%%%%%%
\catchline{}{}{}{}{}
%%%%%%%%%%%%%%%%%%%%%%%%%%%%%%%%%%%%%%%%%%%%%%%%%%%%%%%%%%%%%%%%%%%

\title{On the Kawauchi conjecture about the Conway polynomial of achiral knots}

\author{Nicola Ermotti, Cam Van Quach Hongler and Claude Weber}

\address{Section de math\' ematiques. Universit\' e de Gen\`eve
\\
 CP 64, CH-1211 Gen\`eve 4 (Switzerland)
 \\  
 nicola.ermotti@unige.ch
 \\
cam.quach@unige.ch
\\
claude.weber@unige.ch}

\maketitle
\begin{abstract}

We give a counterexample to the Kawauchi conjecture on the Conway polynomial of achiral knots which asserts that the Conway polynomial $C(z)$ of an achiral knot satisfies the splitting property $C(z)=F(z)F(-z)$ for a polynomial $F(z)$ with integer coefficients. We show that the Bonahon-Siebenmann decomposition of an achiral and alternating knot is reflected in the Conway polynomial. More explicitly, the Kawauchi conjecture is true for quasi-arborescent knots and counterexamples in the class of alternating knots must be quasi-polyhedral.

\end{abstract}

\keywords{Conway polynomial, achiral (amphicheiral), arborescent (algebraic), alternating knots}

\ccode{Mathematics Subject Classification 2000: 57M25}

\section{Introduction}
	%) A SECTION HEADING
Knot polynomials like the Jones polynomial or the HOMFLY polynomial are very good at detecting chirality. As the Conway polynomials of a knot and its mirror image are the same, at first glance the Conway polynomial seems to be less powerful. Nonetheless, the Conway polynomial was known to satisfy some interesting properties : If a knot is achiral, then its determinant is a sum of squares (\cite {G}). Moreover, if we assume that the achiral knot $K$ is alternating then the leading coefficient $c_0(K)$ is, up to sign, a square of an integer (\cite {QW1}, \cite S).  Kawauchi (\cite {Ka})  conjectured for arbitrary achiral knots :
 \begin {conjecture}[Kawauchi]The Conway polynomial $C(z)$ of an achiral knot has the splitting property, i.e. $C(z)=F(z)F(-z)$ for a polynomial $F(z)$ with integer coefficients.
 \end {conjecture}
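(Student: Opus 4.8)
The plan is to reduce the splitting property to a factorization of the Alexander polynomial and then to extract the factor $F$ from the algebraic symmetry forced by achirality. Writing $C(z)=\nabla_K(z)=\Delta_K(t)$ with $z=t^{1/2}-t^{-1/2}$, the substitution $z\mapsto -z$ corresponds to $t\mapsto t^{-1}$, so the identity $C(z)=F(z)F(-z)$ is equivalent to exhibiting $\Delta_K$ as a \emph{norm}
\[
\Delta_K(t)\doteq f(t)\,\overline{f(t)},\qquad \overline{f(t)}:=f(t^{-1}),
\]
for the involution $t\mapsto t^{-1}$, together with a check that the associated $F$ has integer coefficients as a polynomial in $z$. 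Before constructing $f$ I would record two consistency checks that already follow from the literature and that tell me what $F$ must look like: evaluating at $z=2i$ (i.e.\ $t=-1$) turns the sought identity into $\det(K)=|F(2i)|^{2}$, recovering that the determinant of an achiral knot is a sum of two squares (\cite{G}); and comparing top-degree terms turns it into the statement that the leading coefficient is, up to sign, a square (\cite{QW1},\cite{S}). These are precisely the value at $t=-1$ and the leading term of the hoped-for factorization, so they pin down the normalization $F(0)=\pm1$ and give strong evidence that a global $F$ exists.

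To build $f$ I would pass to the Alexander module $A=H_{1}(\widetilde{X};\mathbf{Q})$ over $\Lambda=\mathbf{Q}[t,t^{-1}]$ with its Blanchfield pairing $\beta\colon A\times A\to \mathbf{Q}(t)/\Lambda$, Hermitian for $t\mapsto t^{-1}$. Achirality gives an orientation-reversing homeomorphism $h$ of $S^{3}$ preserving $K$; lifting $h$ to the infinite cyclic cover yields a $\sigma$-semilinear automorphism $\theta$ of $A$ (intertwining multiplication by $t$ and by $t^{-1}$), and the reversal of orientation makes $\theta$ an isometry between $\beta$ and its conjugate-negative:
\[
\beta(\theta x,\theta y)=-\,\overline{\beta(x,y)} .
\]
Such a relation says that $\beta$ is isomorphic to its own negative, so its class in the Witt group of Hermitian forms over $(\mathbf{Q}(t),\,t\mapsto t^{-1})$ satisfies $2[\beta]=0$; my aim is then to promote this to genuine metabolicity, producing a Lagrangian submodule $M\subset A$ with $M=M^{\perp}$. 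The order of such a metabolizer is the candidate $f$, and metabolicity yields exactly $\Delta_K\doteq f\,\overline{f}$, from which $F$ is read off by the substitution $z=t^{1/2}-t^{-1/2}$.

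Two points then require care, and I expect the second to be the real obstacle. First, the factorization must be made integral: the Blanchfield argument lives over $\mathbf{Q}(t)$, so I would use the self-duality of $\beta$ over $\Lambda$ and the symmetry of $\Delta_K$ over $\mathbf{Z}[t,t^{-1}]$ to descend $f$ to $\mathbf{Z}$, and I would check that the passage $t\mapsto(s,s^{-1})$, $s=t^{1/2}$, lands $F$ in $\mathbf{Z}[z]$ rather than merely in $\mathbf{Z}[t^{\pm1/2}]$; the parity bookkeeping here is what controls the \emph{odd} part of $F$, which has no counterpart in the automatically even $C$. Second, and most delicately, the passage from $2[\beta]=0$ to actual metabolicity is not automatic precisely for the self-conjugate irreducible factors $q(t)\doteq\overline{q(t)}$ of $\Delta_K$ occurring to odd multiplicity, where the sign in $\beta\cong-\overline{\beta}$ can obstruct splitting on the corresponding primary summand. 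The heart of the proof is therefore to show that the \emph{geometric} symmetry $h$, not merely its algebraic shadow $\theta$, forces every self-conjugate factor to appear with even multiplicity and pairs the remaining factors as $p\,\overline{p}$. I would attempt this by analysing how $h$ interacts with the canonical (Bonahon--Siebenmann) decomposition of the knot exterior, distributing $\Delta_K$ over the pieces and verifying metabolicity piece by piece; controlling the indecomposable pieces on which the sign obstruction could survive is the step on which the whole argument stands or falls.
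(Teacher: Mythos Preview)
The conjecture you are trying to prove is \emph{false}; the paper does not prove it but disproves it, exhibiting an explicit alternating $+$achiral knot $K$ whose Conway polynomial is
\[
C_K(z)=P(z)(1+z)(1-z)(2z^4-1)^2,\qquad P(z)=4z^8+16z^6+12z^4-16z^2+1,
\]
with $P(z)$ irreducible over $\mathbf{Z}$ and even. Since the even irreducible factor $P(z)$ occurs to the first (odd) power, no splitting $C_K(z)=F(z)F(-z)$ with $F\in\mathbf{Z}[z]$ is possible.

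You actually located the exact point where your argument breaks: the step ``the passage from $2[\beta]=0$ to actual metabolicity'' for self-conjugate irreducible factors of odd multiplicity. You proposed to close this gap by appealing to the geometric symmetry $h$ and the Bonahon--Siebenmann decomposition to force every self-conjugate factor to appear with even multiplicity. The counterexample shows that this step \emph{cannot} be completed in general: the self-conjugate factor $P(z)$ appears with multiplicity one, so no amount of geometric control on $h$ will rule it out. In fact, the paper shows that your decomposition idea does work, but only under an additional hypothesis: if the fixed point of the induced automorphism on the structure tree is of arborescent type (the ``quasi-arborescent'' case), then $K$ can be muted to a $-$achiral knot and Hartley's theorem applies; the counterexample lies in the complementary ``quasi-polyhedral'' case, where your proposed piece-by-piece metabolicity argument genuinely fails.
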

 Hartley and Kawauchi established in \cite{HK} this conjecture for strongly $-/+$achiral knots and for two-bridged knots. Later Hartley (\cite {H}) showed that the conjecture holds for arbitrary $-$achiral knots. The Kawauchi conjecture was also verified for all prime achiral knots of  $\leq$16 crossings (see Remark 4.3 in \cite {S}) and for a large class of hyperbolic achiral knots  tested in \cite {Co}. Despite these results, we show that the Kawauchi conjecture is not true.
  \\
In Section 2, we give a counterexample to the Kawauchi conjecture. This counterexample is an alternating $+$achiral knot of order 4 (Theorem 2.2).
\\
In Section 3, we will show how the Conway polynomial of an alternating achiral knot can be related to the decomposition of the knot into its arborescent and polyhedral parts. For this purpose we will use the decomposition of a knot projection into jewels and twisted band diagrams by Haseman (Conway) circles, as described by Bonahon-Siebenmann. In the appendix we recall the main lines of that decomposition which is carried out in detail in  (\cite {BS} and  \cite{QW2}): To each alternating knot $K$ we associate its  {\bf structure tree} $A(K)$: If $K$ is achiral, the symmetry induces an automorphism $\Phi$ on its structure tree $A(K)$ which has exactly one fixed point (\cite{EQW}).

\begin{definition} An achiral alternating knot $K$ is {\bf quasi-polyhedral} if the fixed point of $\Phi $ is either a jewel or a Haseman circle adjacent to two jewels.
The achiral alternating knot $K$ is {\bf quasi-arborescent} if the fixed point of $\Phi $ is a Haseman circle adjacent to two twisted band diagrams.
\end {definition}
We show that achiral quasi-arborescent knots always satisfy the splitting property (Theorem 3.3). Hence counterexamples such as the one given in Section 2 belong to the class of quasi-polyhedral knots. More generally one can be interested in non-necessarily alternating knots and raise the question: what is among such knots, the nature of counterexamples to Kawauchi conjecture.

Let us recall the definitions of $+/-$ achirality of knots.
\begin{definition} A knot $K$ is {\it achiral} if there is a diffeomorphism $h$ of $S^3$ such that $h(K)=K$ and $h$ reverses the orientation of $S^3$.
\\$K$ is {\it positively achiral} if $h$ preserves the orientation of $K$. It is  {\it negatively achiral} if $h$ reverses the orientation of $K$.

\section{A counterexample}
\end {definition}
\subsection{Even polynomials}
Define a polynomial $P(z)$ with coefficients in a unique factorization domain $A$ to be even if $P(z) = P(-z)$. Assume that char$A$ $\neq 2$. To be even is equivalent to having only monomials of even order.
Let $P(z) \in A[z]$ be even. If we substitute $-z$ for $z$ in its decomposition in irreducible elements, we see that this decomposition must be (up to a unit of $A$)  as follows:
\[
P(z)= ( \Pi_i p_i^{a_i}(z))  ( \Pi_k q_k(z)^ {b_k})  ( \Pi_k q_k(-z)^ {b_k})
\]
where:
\\ 
1) the polynomials $p_i(z)$ and $q_k(z)$ are irreducible;
\\
2) $p_i \neq p_j$  if $ i\neq j $ and
$ q_k \neq q_l$ if $ k\neq l$;
\\
3) the polynomials $p_i(z)$ are even and $q_k(z)$  non even.
\\
It is known that the Conway polynomial of a knot $C(z) \in \bf{Z}$ is even with constant term equal to $1$ (\cite {K}).  Hence one has the following proposition.

\begin {proposition} The Conway polynomial $C(z)$ of a knot satisfies the Kawauchi conjecture if and only if all the exponents $a_i$ in its decomposition into irreducible factors are even. 
 \end{proposition}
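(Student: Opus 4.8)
The plan is to read the proposition straight off the displayed factorisation of $C(z)$ into irreducibles, using that the substitution $z\mapsto -z$ is a ring automorphism of $\mathbf{Z}[z]$ which permutes the associate classes of irreducibles and fixes exactly the classes of the \emph{even} ones. Throughout one uses that $C(0)=1$, so that $z$ (the only odd irreducible, up to associates) does not divide $C$ and, more importantly, that $C(0)$ is a positive unit of $\mathbf{Z}$.

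First I would dispatch the easy implication. If every $a_i$ is even, write $a_i=2c_i$ and put
\[
F(z)\;=\;\Big(\prod_i p_i(z)^{c_i}\Big)\Big(\prod_k q_k(z)^{b_k}\Big)\ \in\ \mathbf{Z}[z].
\]
Since each $p_i$ is even, $p_i(-z)=p_i(z)$, hence $F(-z)=\big(\prod_i p_i(z)^{c_i}\big)\big(\prod_k q_k(-z)^{b_k}\big)$ and $F(z)F(-z)$ is exactly the product in the display, i.e.\ $C(z)=u\,F(z)F(-z)$ for the unit $u=\pm1$ that appears there. Evaluating at $z=0$ gives $1=C(0)=u\,F(0)^{2}$ with $F(0)\in\mathbf{Z}$, which forces $u=1$ (and $F(0)=\pm1$); thus $C(z)=F(z)F(-z)$ and the splitting property holds.

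For the converse, assume $C(z)=F(z)F(-z)$ with $F\in\mathbf{Z}[z]$ and factor $F$ into irreducibles, $F(z)=v\prod_j r_j(z)^{m_j}$ with $v=\pm1$ and the $r_j$ pairwise non-associate. Then each $r_j(-z)$ is again irreducible and
\[
C(z)\;=\;v^{2}\prod_j r_j(z)^{m_j}\,r_j(-z)^{m_j}.
\]
Now fix $i$ and read off the $p_i$-adic valuation of both sides by uniqueness of factorisation. Since $p_i$ is even, $r_j\sim p_i$ iff $r_j(-z)\sim p_i$; as the $r_j$ are pairwise non-associate there is at most one index $j_0$ with $r_{j_0}\sim p_i$, and there is at least one (otherwise $p_i$ would not divide $C$, contradicting $a_i\ge1$). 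That single $j_0$ contributes $m_{j_0}$ through $r_{j_0}(z)^{m_{j_0}}$ and a further $m_{j_0}$ through $r_{j_0}(-z)^{m_{j_0}}$, while every other index contributes $0$; hence $a_i=2m_{j_0}$ is even.

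Once the dictionary between $z\mapsto -z$ and the irreducible factorisation is set up, both directions are essentially formal. The one point that genuinely needs attention — and the only place I expect a reader to pause — is the global sign: a priori one only obtains $C(z)=\pm F(z)F(-z)$, and a factor $-1$ cannot be absorbed into $F$ over $\mathbf{Z}$, so ruling it out is essential; this is precisely where the normalisation $C(0)=1$ is used.
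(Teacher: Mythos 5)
Your argument is correct and is exactly the route the paper intends: the paper simply asserts the proposition as an immediate consequence of the displayed factorisation of an even polynomial into even irreducibles $p_i^{a_i}$ and paired non-even irreducibles $q_k(z)^{b_k}q_k(-z)^{b_k}$, together with the normalisation $C(0)=1$. You have merely written out both implications (and the sign check at $z=0$) in full detail, which matches the paper's reasoning.
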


\subsection{A counterexample}

%\vspace{2in}
%\\FIGURE 1

\begin{figure}
\begin{center}
\includegraphics[scale=0.6]{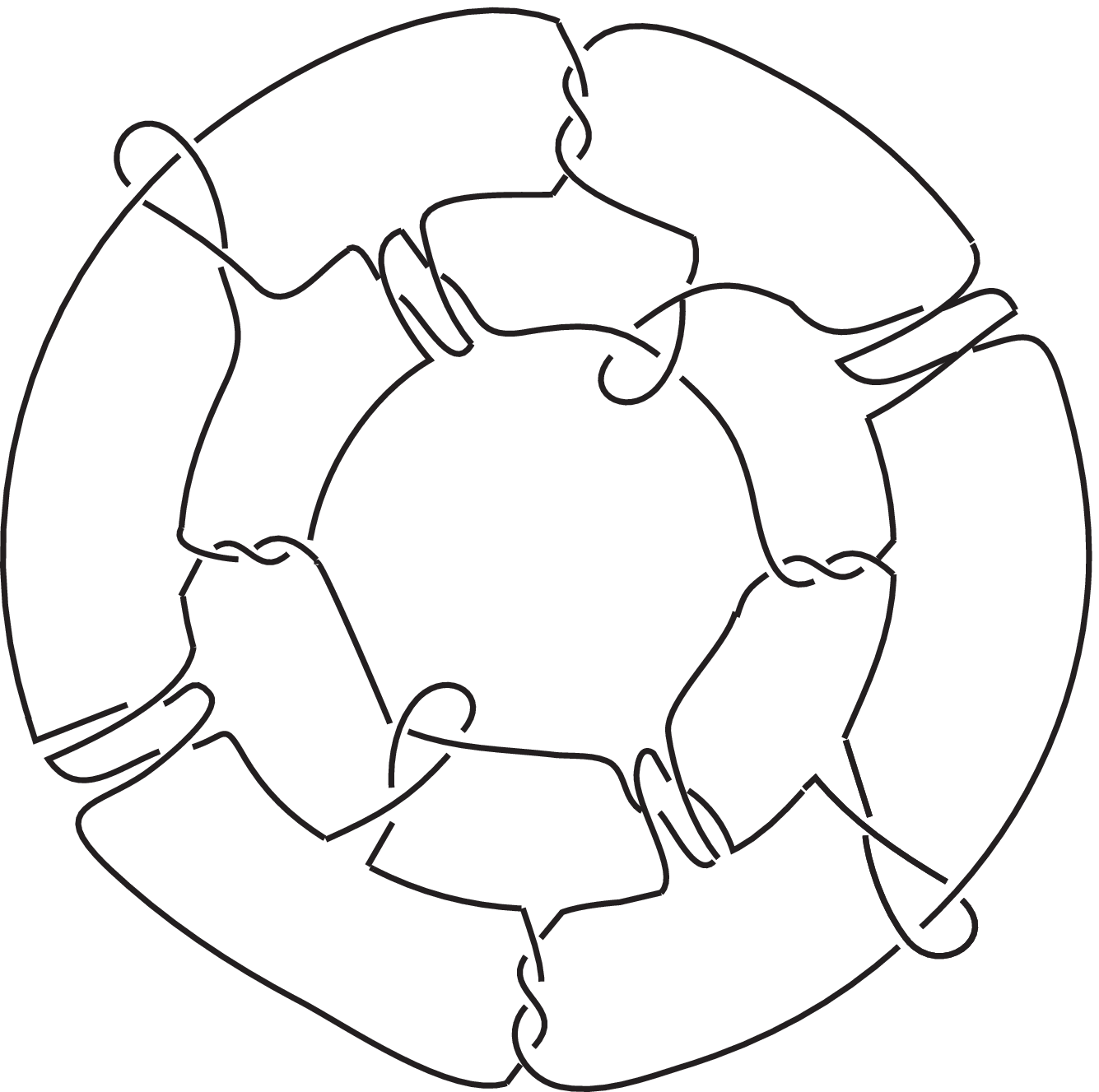}
\caption{}
\end{center}
\label{Figure 1}
\end{figure}

\begin {theorem} There exists an alternating achiral knot $K$ in $S^3$ such that its Conway polynomial is equal to:
\[
C_K (z)=P(z)(1+ z)(1- z)(2z^4 - 1)^2, 
\]
where 
\[
P(z)=4z^8 + 16z^6 + 12z^4 - 16z^2 + 1   
\]
and $P(z)$ is irreducible over $\bf{Z}$.
\end {theorem}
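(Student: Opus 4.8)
The plan is to exhibit $K$ as an explicit alternating diagram $\mathcal D$ on $S^2$ carrying a visible symmetry of order $4$, to read off alternation and achirality from the picture, and then to compute $C_K(z)$ by Conway's linear skein calculus for tangles. Since a counterexample in the alternating class is forced to be quasi-polyhedral, the natural form for $\mathcal D$ is to insert a small number of rational tangles into one or two basic polyhedra — the jewel(s) at, or adjacent to, the fixed point of $\Phi$ — the insertion being equivariant for an orientation-reversing diffeomorphism $h$ of $S^3$ with $h^4 = \mathrm{id}$ and $h^2 \neq \mathrm{id}$ which acts on the projection sphere as a quarter-turn rotation. That $\mathcal D$ is alternating is then visible from the figure. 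For achirality one checks that $h$ — which interchanges the two sides of $S^2$, hence swaps over- and under-crossings — carries $\mathcal D$ to itself up to isotopy of $S^2$ and, following the strand around $\mathcal D$, preserves the orientation of $K$; since $h$ reverses the orientation of $S^3$, this makes $K$ positively achiral, of order $4$ because $h^2 \neq \mathrm{id}$.

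For the Conway polynomial I would use that the Conway skein module of a $2$-string tangle is free of rank $2$ over $\mathbf{Z}[z]$, with basis the $0$- and $\infty$-tangles; equivalently, a tangle is recorded by the Conway polynomials of its two standard (numerator and denominator) closures. Adding one crossing to a twist region acts on this pair of polynomials by a fixed unipotent $2\times 2$ matrix over $\mathbf{Z}[z]$, so each rational tangle inserted into $\mathcal D$ contributes an explicit vector, and the basic polyhedron then assembles these vectors multilinearly — through Conway's tangle addition formulas, applied around the polyhedron — into the Conway polynomial of $K$. The point of the symmetry is that the inserted tangles fall into one or two $h$-orbits whose vectors are equal or mirror images, so the computation stays finite and can be done by hand; expanding and collecting terms yields $C_K(z) = P(z)(1+z)(1-z)(2z^4-1)^2$. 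The construction is rigged so that the even irreducible factor $P(z)$ occurs to the first power: by Proposition 2.1 this is exactly what makes $K$ a counterexample to the Kawauchi conjecture.

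Finally, $P(z) = 4z^8 + 16z^6 + 12z^4 - 16z^2 + 1$ must be shown irreducible over $\mathbf{Z}$; by Gauss's lemma it suffices to argue over $\mathbf{Q}$. Since $P$ is even, every proper factorization is controlled by the quartic $Q(w) = 4w^4 + 16w^3 + 12w^2 - 16w + 1$ obtained via $w = z^2$ (together with the pairing of conjugate factors of $P$), so one verifies in turn that $P$ has no rational root, that $Q$ is irreducible over $\mathbf{Q}$, and then rules out the few remaining splitting types of $P$ — most easily by reducing $P$ modulo a small prime. I expect the Conway computation to be the principal obstacle: $\deg C_K = 18$, so $K$ has genus $9$ and $\mathcal D$ is a large diagram, and one must organize the tangle decomposition so that the order-$4$ symmetry genuinely collapses the bookkeeping; a naive skein resolution tree, or an $18 \times 18$ Seifert determinant, would be considerably heavier. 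The achirality check is routine once the symmetric diagram is in hand, and the irreducibility of $P$ is a finite verification.
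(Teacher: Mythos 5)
Your overall architecture matches the paper's: the authors exhibit an explicit alternating diagram whose projection is invariant under a rotatory reflection of order $4$ (hence the knot is positively achiral of order $4$), and the example is indeed of the quasi-polyhedral type, as you predict. Two differences are worth noting. First, the paper performs no skein computation at all: the Conway polynomial of the knot in Figure~1 is simply computed by KnotScape. Your equivariant tangle-calculus plan is a legitimate and more self-contained substitute, but you do not exhibit the diagram or carry the computation out, so as written the existence half of the theorem remains a programme rather than a proof; this is the part you yourself flag as the principal obstacle.

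Second, and this is a genuine gap: your plan for the irreducibility of $P(z)=4z^8+16z^6+12z^4-16z^2+1$ cannot be completed as stated. You correctly reduce the problem to excluding a factorization $P(z)=q(z)q(-z)$ with $q$ a non-even irreducible quartic (the even factorizations being controlled by $Q(w)=4w^4+16w^3+12w^2-16w+1$), but you propose to kill this last case ``by reducing $P$ modulo a small prime.'' No prime can do this: since $P(z)=Q(z^2)$ and the product of the roots of $Q$ is $1/4$, which is a nonzero square modulo every odd prime, a standard norm argument shows that whenever $Q$ is irreducible mod $p$ its roots are squares in $\mathbf{F}_{p^4}$, so $P$ splits into two quartics modulo every odd prime (and mod $2$ the leading coefficient vanishes). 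In fact modulo $3$ one has exactly $\overline P(z)=\overline q(z)\,\overline q(-z)$ with $\overline q(z)=z^4+z^3+z^2+1$ irreducible --- precisely the splitting type you must exclude. The paper turns this to its advantage: the mod-$3$ factorization forces any integral factor to have the shape $q(z)=(2z^4+az^2-1)+(bz^3+cz)$, and comparing $q(z)q(-z)$ with $P(z)$ yields the system $4a-b^2=16$, $a^2-2bc=16$, $2a+c^2=16$, which forces $4\mid a,b,c$ and then produces a parity contradiction after the substitution $a=4a'$, $b=4b'$, $c=4c'$. Some such global (here essentially $2$-adic) argument is unavoidable, and your sketch needs one in place of the single mod-$p$ reduction.
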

Once the irreducibility of $P(z)$ is established, the knot $K$ is clearly a counterexample to the Kawauchi conjecture.
\begin{proof}

1) Let us consider the alternating knot $K$ pictured in Figure 1. It is rather easy to realize that $K$ is positively achiral (the alternating projection of $K$ is invariant by a rotatory reflection of order 4). By KnotScape, its Conway polynomial is:

\[
C_K (z)=(4z^8 + 16z^6 + 12z^4 -16z^2 + 1)(1+z)(1- z)(2z^4 - 1)^2.  
\]
2) It remains to show that the polynomial $P(z)=4z^8 + 16z^6 +12z^4 -16z^2 + 1$ is irreducible over $\bf{Z}$. 
\\
First we decompose $P(z)$ over the field $\bf{F_3}$. This decomposition gives information  on the possible irreducible factors of $P(z)$, from which we can deduce that $P(z)$ is indeed irreducible over $\bf{Z}$.
On $\bf{F_3}$ the polynomial $P(z)$ is equal to:
\[
\overline{P}(z)=z^8 + z^6 + 2z^2 + 1. 
\]
It is easy to see that over  $\bf{F_3}$:

\[
\overline{P}(z)=\overline{q}(z) \overline{q}(-z)
\]
\\
where  $\overline{q}(z)=z^4 + z^3 + z^2 + 1$ is irreducible over $\bf{F_3}$.
\\
Hence $P(z)$ either is irreducible or decomposes as a product $q(z)q(-z)$ where $q(z)$ is irreducible non even and with reduction modulo 3 equal to $\overline{q}(z)$. We show that the latter case does not occur.
\\
Write $q(z) = (2z^4 + az^2-1)+(bz^3 + cz)$.
The comparison of $q(z)q(-z)$ with $P(z)$ produces the following equations for $a, b, c$:
\begin{align}
4a-b^2 &=16  \\       
a^2-2bc &=16  \\       
2a+c^2 &=16  
\end{align}
A little computation shows that the coefficients $a, b, c$ are all divisible by 4. Write $a=4a'$, $b=4b'$, $c=4c'$.
\\
Then the equations (2.1), (2.2) and (2.3) give rise to the following equations:
\begin{align}
a'-b'^2&=1\\
a'^2-2b'c'&=1\\
a'+2c'&=2
\end{align}
\\
The equation (2.6) implies that $a'$ is even and hence this contradicts (2.5).
\end{proof}

\begin {remark} The above counterexample satisfies the following conjecture of J.Conant (\cite {Co}).
\begin {conjecture}[Conant] Let $K$ be an achiral knot and $C(z)$ its Conway polynomial. Then there exists a polynomial  $F(Z) \in \bf{Z}_4[z^2]$ such that:
\[
           F(z)^2 = C(z)C(z^2)C(iz) \in \bf{Z}_4[z^2]
 \]          
\end {conjecture}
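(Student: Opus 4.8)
The plan is to verify Conant's congruence directly for the knot $K$ of Theorem 2.2 by reducing its Conway polynomial modulo $4$. First I would record that $C_K(z)$ is an even polynomial, so writing $C_K(z)=D(z^2)$ we get $C_K(iz)=D((iz)^2)=D(-z^2)$, a polynomial with integer coefficients; likewise $C_K(z^2)=D(z^4)$ has integer coefficients. Consequently the product $C_K(z)C_K(z^2)C_K(iz)$ genuinely lies in $\mathbf{Z}[z^2]$, so its reduction modulo $4$ is a well-defined element of $\mathbf{Z}_4[z^2]$ and the statement makes sense. In particular no square root of $-1$ is needed in the coefficient ring, because the substitution $z\mapsto iz$ only acts on the even powers of $z$; this is precisely what the evenness of the Conway polynomial buys us.

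The crux is a pleasant collapse modulo $4$. I would observe that every non-constant coefficient of $P(z)=4z^8+16z^6+12z^4-16z^2+1$ is divisible by $4$, so $P(z)\equiv 1 \pmod 4$, and similarly $(2z^4-1)^2=4z^8-4z^4+1\equiv 1 \pmod 4$. Hence in $\mathbf{Z}_4[z]$ the two complicated factors disappear and
\[
C_K(z)\equiv (1+z)(1-z)=1-z^2 \pmod 4 .
\]
From this single congruence the remaining two factors follow by substitution: replacing $z$ by $z^2$ gives $C_K(z^2)\equiv 1-z^4 \pmod 4$, while replacing $z$ by $iz$ (equivalently $z^2$ by $-z^2$) gives $C_K(iz)\equiv 1+z^2 \pmod 4$.

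It then remains only to multiply the three reduced factors. I would compute
\[
C_K(z)C_K(z^2)C_K(iz)\equiv (1-z^2)(1-z^4)(1+z^2)=(1-z^4)^2 \pmod 4 ,
\]
so that the choice $F(z)=1-z^4\in\mathbf{Z}_4[z^2]$ satisfies $F(z)^2=C_K(z)C_K(z^2)C_K(iz)$ in $\mathbf{Z}_4[z^2]$, as claimed. I do not anticipate any genuine obstacle: once the mod-$4$ vanishing of $P(z)$ and of $(2z^4-1)^2$ is noticed, the verification is a short telescoping of the conjugate factors. The only point demanding care is the one settled in the first paragraph, namely confirming that $C(z)C(z^2)C(iz)$ is legitimately an element of $\mathbf{Z}_4[z^2]$; the evenness of the Conway polynomial guarantees exactly this, and the remainder is elementary arithmetic.
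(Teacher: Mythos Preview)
Your verification is correct. Note, however, that the paper does not give a proof of this statement at all: Conant's assertion is recorded as a \emph{conjecture}, and the paper merely remarks, without any computation, that the counterexample $K$ of Theorem~2.2 happens to satisfy it. What you have written is therefore not a different route to the paper's argument but a complete justification of a claim the paper leaves unproved.

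Your argument is the natural one and is fully rigorous. The key observation---that $P(z)\equiv 1\pmod 4$ and $(2z^4-1)^2\equiv 1\pmod 4$, so that $C_K(z)\equiv 1-z^2\pmod 4$---immediately yields $C_K(z)C_K(z^2)C_K(iz)\equiv (1-z^2)(1-z^4)(1+z^2)=(1-z^4)^2\pmod 4$, and $F(z)=1-z^4$ does the job. Your preliminary remark that the evenness of the Conway polynomial forces $C(iz)\in\mathbf{Z}[z^2]$, so that the whole expression lives in $\mathbf{Z}_4[z^2]$ without any need for $i$ in the coefficients, is exactly the point that makes the statement meaningful, and it is good that you spell it out. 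There is nothing to add: your computation supplies what the paper only asserts.
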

\end {remark}

\section{The Conway polynomial of achiral alternating knots}

Let us briefly recall some relations between the Conway polynomial and some topological properties of knots.
\\
1) Arborescence is not detected by the Conway polynomial. Let us recall that 
if $K$ is a knot then its Conway polynomial is an even polynomial with constant term equal to $1$. Under these constraints, every polynomial can be realized as the Conway polynomial of a knot, or even that of an arborescent (and hyperbolic) knot.

\begin{figure}
\begin{center}
\includegraphics[scale=1]{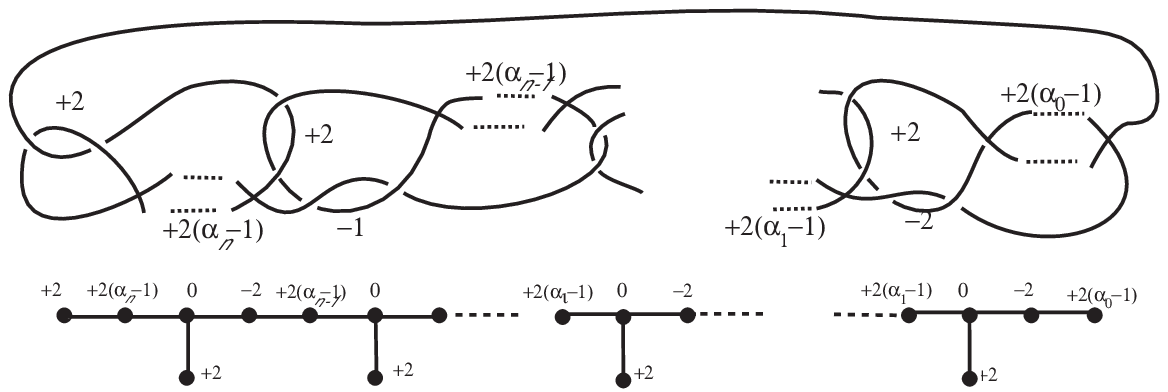}
\caption{}
\end{center}
\label{Figure 2t}
\end{figure}

\begin {theorem} (\cite {Q}). For a polynomial $C(z)= 1+ \sum_{i=1}^{n} a_i z^{2i}$, there exists an arborescent knot which admits $C(z)$ as its Conway polynomial.
\end {theorem}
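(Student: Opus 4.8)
The plan is to build the desired arborescent knot by assembling rational tangles in a tree pattern, using the fact that Conway polynomials of arborescent knots obey a computable skein/linearity rule with respect to the tangle operations. First I would recall that a rational (two-bridge) tangle $T$ has a well-defined bracket-type invariant: cutting the knot along a Conway sphere bounding $T$, the Conway polynomial of the resulting knot is a $\mathbf{Z}[z]$-linear combination of the ``numerator'' and ``denominator'' closures, with coefficients depending only on $T$. Concretely, for a horizontal twist region with $m$ half-twists the Conway polynomial changes by a controlled amount, and plumbing (the $+$ operation on tangles along the arborescent tree) corresponds to addition in the relevant module. The key point is that the full twist on two strands multiplies the relevant entry by $z$ up to lower-order corrections, so iterating twists lets one produce any prescribed value in the top coefficient while the structure of the tree keeps everything even.

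The main construction I would carry out: start from a pretzel-type or ``caterpillar'' arborescent skeleton $P(a_1, a_2, \dots, a_k)$, or more flexibly a linear plumbing of horizontal twist tangles, and track the Conway polynomial as a function of the twist parameters $a_1,\dots,a_n$. Using the standard skein recursions $C(L_+) - C(L_-) = z\,C(L_0)$ applied at each twist region, one gets that $C(z)$ is a polynomial in $z$ whose coefficients are (up to sign and shifts) elementary-symmetric-type expressions in the $a_j$'s. The second step is an inductive argument: given a target $C(z) = 1 + \sum_{i=1}^n a_i z^{2i}$, one solves for the twist parameters one coefficient at a time, starting from the top coefficient $a_n$ (which is realized by the number of twist regions) and working downward, at each stage adjusting one more integer parameter to hit $a_i$ without disturbing the already-fixed higher coefficients. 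Because the constant term of a Conway polynomial is automatically $1$ and the polynomial is automatically even, the only genuine constraints are the ones already assumed, so the linear system is solvable over $\mathbf{Z}$.

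The final step is to ensure the resulting knot is genuinely arborescent (i.e. that the plumbing tree is nontrivial and the pieces are rational tangles that are not $0$ or $\infty$, so the diagram does not collapse to something simpler), which is a matter of choosing the twist parameters generically or adding a fixed ``padding'' tangle that contributes trivially to $C(z)$; one can also arrange hyperbolicity by invoking Thurston/Bonahon–Siebenmann criteria for arborescent links, though the statement as quoted only needs arborescent. The hard part will be the bookkeeping in the inductive step: verifying that adjusting the $i$-th parameter really changes the $z^{2i}$-coefficient by a unit multiple (so the induction closes over $\mathbf{Z}$, not just $\mathbf{Q}$) and leaves the higher coefficients untouched. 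This triangularity of the coefficient-vs-parameter map is the crux, and I would establish it by an explicit small-tangle computation of the skein recursion feeding into a clean induction on $n$.
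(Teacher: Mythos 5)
Your plan is essentially the paper's proof: the paper takes a specific weighted planar tree (a caterpillar-like arborescent diagram, Figure 2) with integer weights $a_1,\dots,a_n$ and shows by induction on the skein relation that its Conway polynomial is $1+\sum_{i=0}^{n}(-1)^{n+i}a_i z^{2(n-i+1)}$, so the coefficients are the weights up to sign. The ``triangularity'' you identify as the crux holds in its strongest, diagonal form for this particular tree, so no coefficient-by-coefficient solving is needed.
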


\begin{proof} Let us consider the arborescent knot which has its weighted planar tree depicted in Figure 2. One can show that its Conway polynomial is equal to:
\[
C(z)=1+(\sum_{i=0}^{n}(-1)^{n+i}a_iz^{2(n-i+1)})
\]
\\
The proof is done by induction.
\end{proof}

2) The fact that a knot is alternating is reflected in the alternating form of its Alexander polynomial (another version of the Conway polynomial): all of its coefficients are non zero and alternate in signs.  However there exist alternating Alexander polynomials which are not associated to an alternating knot (\cite {LM})
\\
3) It is not known whether there is an achiral knot associated to each Conway polynomial satisfying the splitting property. However for the special case where $C(z)=F(z)^2$ and $F(z)$ is an even polynomial with $F(0)=1$, E. Flapan has shown that there exists a prime strongly +achiral knot which has $C(z)$ as Conway polynomial (\cite {F}).
\\
Let us now focus on the class of achiral alternating knots. Surprisingly the Conway polynomial proves to be more powerful than one would expect. For this purpose, we will use our definitions and results in (\cite {EQW}) (see also the appendix of this present paper).

\begin{theorem}Let $K$ be an achiral quasi-arborescent knot. Then there exists a polynomial $F(z)$ with integer coefficients such that $C(z)$ is equal to
\[
C(z) = F(z) F(-z)
\]
\end{theorem}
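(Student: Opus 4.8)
The plan is to exploit the geometric structure of a quasi-arborescent achiral knot: by Definition~1.1, the fixed point of the automorphism $\Phi$ on the structure tree $A(K)$ is a Haseman circle $\gamma$ adjacent to two twisted band diagrams. This circle $\gamma$ is a separating Conway sphere in $S^3$ that is preserved (setwise) by the achirality involution $h$. Cutting along $\gamma$ decomposes $K$ into two tangles $T$ and $T'$ which are exchanged by $h$; since $h$ reverses the orientation of $S^3$, the tangle $T'$ is the mirror image $\overline{T}$ of $T$. So $K$ is obtained by gluing a tangle $T$ to its mirror image along a four-punctured sphere.

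The key step is then a tangle-additivity formula for the Conway polynomial. First I would recall (or set up, via the Conway skein relation applied to the four tangle-closures, or via the Kauffman--Conway theory of tangles) that to a two-string tangle $T$ one can associate a pair of Conway polynomials — the ``numerator'' and ``denominator'' closures, or better a vector in a rank-two Conway module — so that the polynomial of any knot obtained by gluing $T$ to another tangle $S$ is a bilinear pairing of the invariants of $T$ and $S$. Concretely, writing $\mathbf{v}(T) = (C_{N}(T), C_{D}(T))$ for the Conway polynomials of the numerator and denominator closures, the closure of $T \cup S$ has Conway polynomial of the form $\mathbf{v}(T)^{\mathsf{T}} M\, \mathbf{v}(S)$ for a fixed $2\times 2$ matrix $M$ (possibly after accounting for connected-sum corrections, which contribute only even-power factors and are harmless by Proposition~2.1).

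Next I would analyze how mirroring acts on this invariant. Reflecting a tangle $T$ in $S^3$ sends each closure to the mirror of that closure, hence sends each component Conway polynomial $C_\bullet(z)$ to $C_\bullet(-z)$ (mirror image negates $z$ in the Conway polynomial); possibly it also swaps or recombines the two closures depending on how $\gamma$ is identified, but in the quasi-arborescent case the adjacency to two \emph{twisted band diagrams} pins down the gluing so that the substitution is exactly $z \mapsto -z$ coordinatewise. Therefore $\mathbf{v}(\overline{T}) = \overline{\mathbf{v}(T)}$, the coefficientwise substitution $z\mapsto -z$. Plugging $S = \overline{T}$ into the bilinear formula gives $C_K(z) = \mathbf{v}(T)(z)^{\mathsf{T}} M\, \overline{\mathbf{v}(T)(z)} = \mathbf{v}(T)(z)^{\mathsf{T}} M\, \mathbf{v}(T)(-z)$, and the final task is to check that $M$ is such that this bilinear expression factors as $F(z)F(-z)$ — e.g. if $M$ is (conjugate to) the identity or an antidiagonal matrix, one gets respectively $\sum C_\bullet(z)C_\bullet(-z)$-type or $C_N(z)C_D(-z)+C_N(-z)C_D(z)$ expressions, the latter being visibly of the form $F(z)F(-z)$ with $F = C_N + C_D$ up to sign, while the former requires a small extra argument (completing the square over $\mathbf{Z}$, using that the cross term has the right parity).

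The main obstacle I anticipate is precisely this last point: controlling the gluing matrix $M$ and the exact form of the mirror action, including bookkeeping of orientations (the knot $K$ is only defined up to the choices in Definition~1.2, and $h$ may be $+$ or $-$achiral) and of the connected-sum/normalization corrections. Getting $C_K(z)$ into the literal shape $F(z)F(-z)$ with $F\in\mathbf{Z}[z]$ — rather than merely $F(z)F(-z)$ over $\mathbf{Z}[1/2]$ or up to an even-square factor — will require care about integrality, and this is where the hypothesis that the fixed Haseman circle is adjacent to two twisted band diagrams (rather than to jewels) does the real work, since it forces the self-gluing to be the ``symmetric'' type that makes the square root $F$ exist integrally.
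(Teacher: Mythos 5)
Your overall strategy --- cut along the invariant Haseman circle, write $K$ as a tangle $T$ glued to its mirror image, and extract the splitting from the bilinear gluing formula for the Conway polynomial --- breaks down at exactly the step you defer to the end. The standard tangle-sum formula gives $C_K(z) = C_N(T)(z)\,C_D(\overline T)(z) + C_D(T)(z)\,C_N(\overline T)(z)$, and with the mirror acting by $z\mapsto -z$ this becomes $C_N(z)C_D(-z)+C_D(z)C_N(-z)$. This expression is even, but it is \emph{not} of the form $F(z)F(-z)$: your candidate $F=C_N+C_D$ gives $F(z)F(-z)=C_N(z)C_N(-z)+C_D(z)C_D(-z)+C_N(z)C_D(-z)+C_D(z)C_N(-z)$, which carries two extra ``diagonal'' terms. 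For a concrete failure of the shape, $C_N(z)=1+z$, $C_D(z)=z$ yields $-2z^2$, which is not $F(z)F(-z)$ for any $F$ with rational coefficients. Such a pair need not of course be realized by a tangle whose closure is a knot, but it shows the factorization cannot follow formally from bilinearity plus the mirror symmetry; some genuinely knot-theoretic input is required, and the sketch does not supply it. The remarks about $M$ being conjugate to the identity or antidiagonal, and about ``completing the square over $\mathbf{Z}$,'' are precisely where the whole content of the theorem sits, and they are not carried out.

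The paper's proof avoids all of this by a reduction. If $K$ is $-$achiral, the splitting is Hartley's theorem \cite{H}, taken as known. If $K$ is $+$achiral and quasi-arborescent, the classification of symmetric minimal projections around the invariant Haseman circle adjacent to two twisted band diagrams (Theorem 5 of \cite{EQW}, recalled in the appendix as the Type $I$ / Type $II$ dichotomy) shows that $K$ has a mutant which is $-$achiral; since mutation preserves the Conway polynomial, Hartley's theorem applied to the mutant finishes the argument. To salvage your approach you would essentially have to reprove Hartley's theorem for this class of knots by direct tangle calculus, which is a substantially harder task than the two observations in your sketch and is not addressed by them.
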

\begin{proof}If K is $-$achiral, by \cite {H} one has the result. If $K$ is $+$achiral and quasi-arborescent, then $K$ can be muted in a quasi-arborescent knot which is $-$achiral (in fact with a minimal projection of type $II$). As mutants have the same Conway polynomial (see for instance [\cite {Cr}), the theorem is proved.
\end{proof}
\begin{corollary}In the class of alternating knots, counterexamples to the Kawauchi conjecture are necessarily quasi-polyhedral.
\end{corollary}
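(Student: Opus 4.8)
The plan is to read off the corollary from Theorem~3.3, the only extra ingredient being that an achiral alternating knot always falls into one of the two classes of Definition~1.1; and this in turn comes from the description, given in \cite{EQW}, of the unique fixed point of the automorphism $\Phi$ on the structure tree $A(K)$.

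So the first step is to recall that description. The tree $A(K)$ is decorated, its vertices being labelled either ``jewel'' or ``twisted band diagram'', and $\Phi$ is a simplicial automorphism of it having exactly one fixed point. Since $\Phi$ has a fixed point it is not hyperbolic, so it fixes a vertex or inverts an edge; and uniqueness of the fixed point forces this to happen for exactly one vertex, or for exactly one edge (two fixed vertices, two inverted edges, or one of each would produce a fixed sub-arc). If $\Phi$ fixes a jewel, or inverts a Haseman circle both of whose sides are jewels, then $K$ is quasi-polyhedral --- and note that the two pieces across an inverted Haseman circle are automatically of the same kind, as $\Phi$ respects the decoration. If $\Phi$ inverts a Haseman circle both of whose sides are twisted band diagrams, then $K$ is quasi-arborescent. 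The one remaining case, $\Phi$ fixing a twisted band diagram $B$, is dealt with in \cite{EQW}: $\Phi$ then restricts to a rotational symmetry of $B$ whose fixed locus is a Conway circle splitting $B$ into two halves exchanged by $\Phi$, and refining $A(K)$ along this circle turns the fixed point into a Haseman circle adjacent to two twisted band diagrams, so $K$ is again quasi-arborescent. Hence every achiral alternating knot is quasi-polyhedral or quasi-arborescent.

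The corollary is now immediate. Let $K$ be an achiral alternating knot violating the Kawauchi conjecture, so $C_K(z) \neq F(z) F(-z)$ for every $F(z) \in \mathbf{Z}[z]$. By the dichotomy just recalled $K$ is quasi-polyhedral or quasi-arborescent; the latter is impossible, since Theorem~3.3 would then exhibit an $F(z) \in \mathbf{Z}[z]$ with $C_K(z) = F(z) F(-z)$. Therefore $K$ is quasi-polyhedral. The only real content beyond Theorem~3.3 is the exhaustiveness of the dichotomy, so the single delicate point is the fixed-point analysis of $\Phi$ on $A(K)$ --- and within it, the reduction of the case of a fixed twisted band diagram to the quasi-arborescent one --- which is precisely the structural material of \cite{EQW} recalled in the appendix.
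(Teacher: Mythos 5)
Your proposal is correct and matches the paper's (implicit) argument: the corollary follows from Theorem~3.3 together with the dichotomy, recalled in the appendix from \cite{EQW}, that the unique fixed point of $\Phi$ on $A(K)$ is either a jewel, a Haseman circle bounding two jewels, or a Haseman circle bounding two twisted band diagrams, so every achiral alternating knot is quasi-polyhedral or quasi-arborescent. Your extra discussion of why a fixed twisted-band-diagram vertex does not occur as a separate case is consistent with, and correctly deferred to, the structural results of \cite{EQW}.
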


\section{Remarks and Questions}
1) The Dasbach-Hougardy knot (\cite {DH}) is an arborescent knot which has a mutant which is $-$achiral as described by Theorem 5 in \cite  {EQW}. Although these two knots have the same Conway polynomial, they are distinguished by the fact that the Dasbach-Hougardy knot does not satisfy the Kauffman conjecture as shown in \cite {DH}.
\\
2)  Are $+$achiral alternating knots which satisfy the splitting property for their Conway polynomial quasi-arborescent?
\\
3) Can the arborescence and polyhedral structures in the class of alternating achiral knots be detected by other polynomial invariants ?

\section{Appendix: The structure tree and its automorphism}

Following Bonahon-Siebenmann, we decompose canonically in (\cite {QW2}) any link projection $\Pi$ which is connected and prime into diagrams called jewels and twisted band diagrams.

\begin{figure}[h]
\begin{center}
\begin{minipage}[b]{0.4\linewidth}
  \centering
  \includegraphics[scale=0.8]{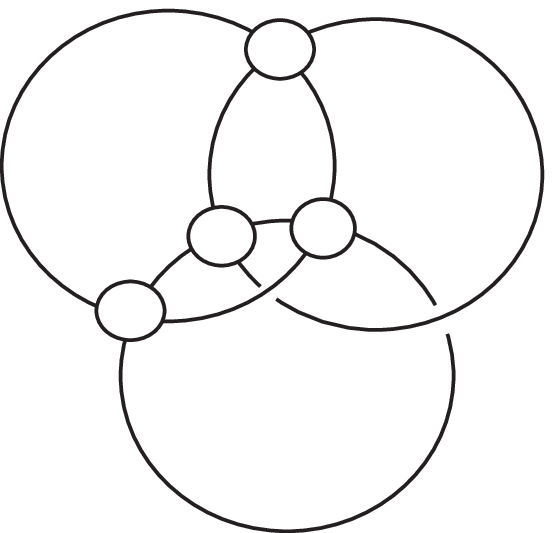}
  \caption{A jewel}
  \end{minipage}
\hspace{0.5cm}
\begin{minipage}[b]{0.4\linewidth}
  \centering
  \includegraphics[scale=0.3,bb=0 0 559 400]{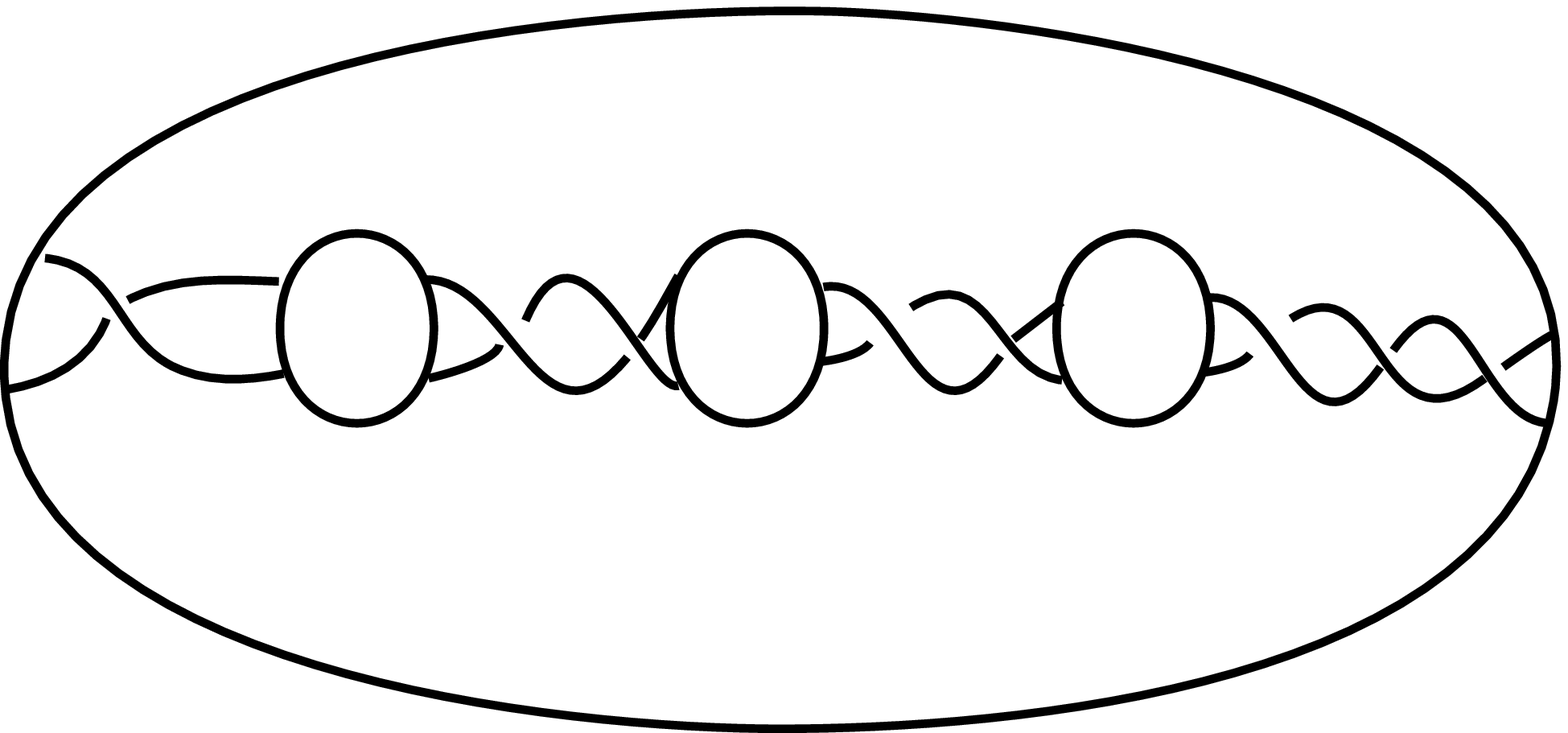}
    \caption{A twisted band diagram}
    \end{minipage}
    \end{center}
\end{figure}

The decomposition is performed along Haseman (Conway) circles, i.e. circles which intersect $\Pi$ transversely in four points and which satisfy an incompressibility condition. The decomposition of $\Pi$ is partially coded by a tree. Its vertices represent the diagrams of the decomposition and its edges the Haseman circles.
Any minimal projection of a prime alternating knot $K$ satisfies the required conditions. Moreover the tree depends only on $K$ and not on a particular minimal projection. We call it the {\bf structure tree} of $K$ and denote it by $A(K)$. An alternating knot is called {\bf arborescent} if all vertices of $A(K)$ are twisted band diagrams and is called {\bf polyhedral} if all vertices are jewels.
If $K$ is alternating and achiral, the symmetry induces an automorphism 
\[
\Phi:A(K)\rightarrow A(K).
\]
We prove in \cite {EQW} that:
\\
1) $\Phi$ has exactly one fixed point;
\\
2) the fixed point corresponds either to an invariant jewel or to a Haseman circle $\gamma$;
\\
3) in the latter case, $\gamma$ bounds either two jewels or two twisted band diagrams. These jewels or twisted band diagrams are exchanged by    $\Phi$.
\begin{figure}[h]
\begin{center}
\begin{minipage}[b]{0.4\linewidth}
  \centering
  \includegraphics[scale=0.3]{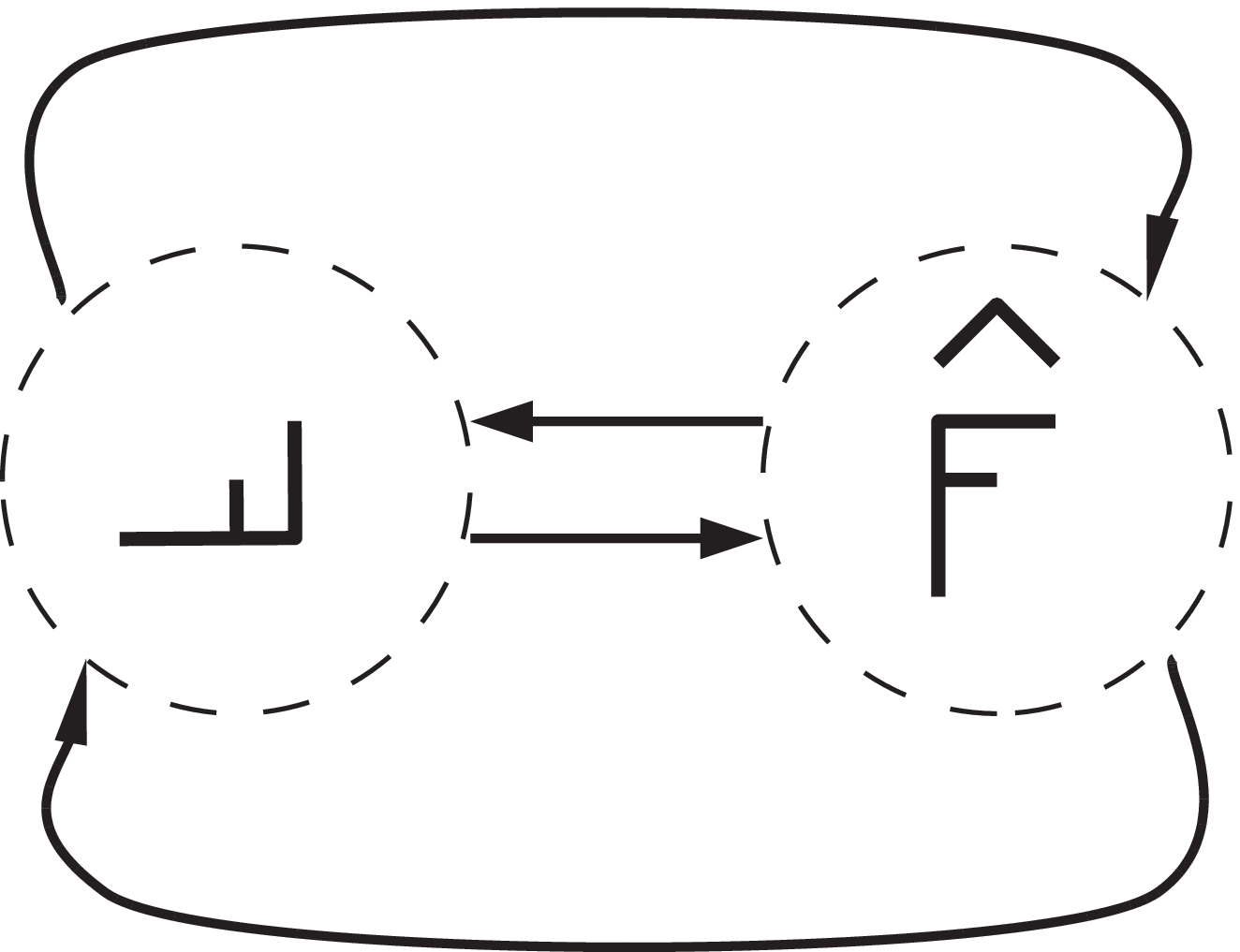}
  \caption{Projection of Type I}
  \end{minipage}
\hspace{0.5cm}
\begin{minipage}[b]{0.4\linewidth}
  \centering
  \includegraphics[scale=0.3]{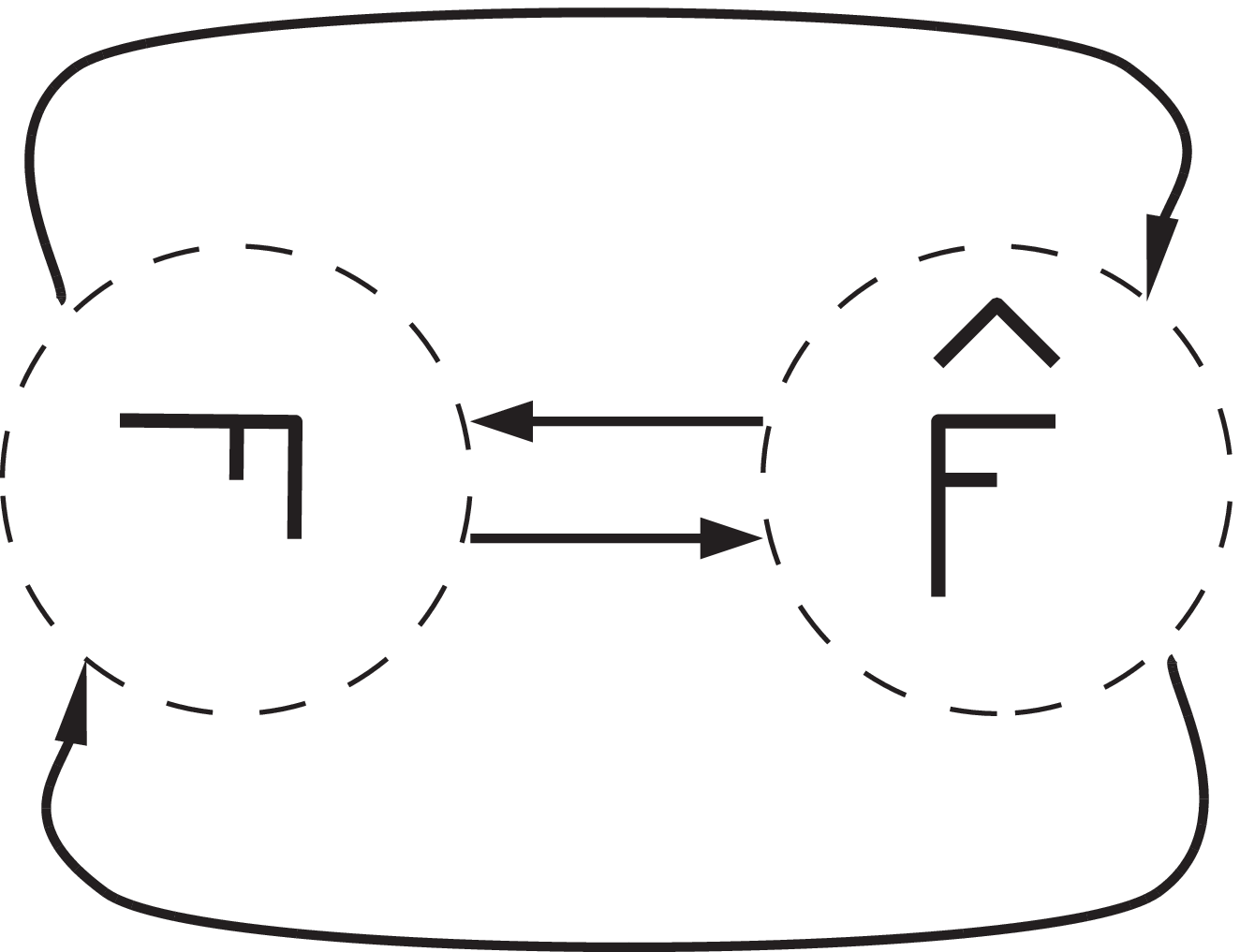}
    \caption{Projection of Type II}
    \end{minipage}
    \end{center}
\end{figure}

In the case 3), one has Theorem 5 in \cite {EQW}:
\begin{theorem}Let $K$ be an oriented $\pm$achiral knot. Suppose that $K$ has a projection with an invariant Haseman circle $\gamma$. Then up to a global change of orientation, $K$ admits a minimal projection of Type $I$ or Type $II$ as shown in Figures 5 and 6. 

\end {theorem}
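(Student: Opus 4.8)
\medskip
\noindent\emph{Proof strategy.} Since $K$ is $\pm$achiral there is a diffeomorphism $h$ of $S^3$ reversing the ambient orientation with $h(K)=K$. Because the Bonahon--Siebenmann decomposition of a prime connected projection is canonical (it depends only on $K$), $h$ sends the decomposition to an equivalent one and therefore induces the tree automorphism $\Phi$ of $A(K)$; with the properties of $\Phi$ recalled above, the hypothesis that $K$ has a projection with an invariant Haseman circle says exactly that the fixed point of $\Phi$ is an edge $\gamma$, in which case $\Phi$ interchanges the two complementary subtrees meeting $\gamma$. The plan is to promote this combinatorial symmetry to a genuine finite order symmetry of a minimal projection that preserves $\gamma$, and then to read off the two normal forms from the action of $h$ on the four points of $\gamma\cap K$.

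The first and main step is the equivariant realisation. I would use: (i) the uniqueness up to flypes of minimal projections of a prime alternating knot (Menasco--Thistlethwaite), so that $h$ permutes the finite set of minimal projections; (ii) the canonicity of the piece cut off by $\gamma$, allowing one to isotope $h$ so that it preserves the projection sphere $S^2$ and carries a chosen minimal projection to one in which $\gamma$ is again a Haseman circle; and (iii) a normalisation step --- possibly after a mutation along $\gamma$, as in the proof of Theorem 3.3 --- to arrange $h(\gamma)=\gamma$ exactly and $h|_{S^2}$ of finite order. I expect this to be the principal difficulty, since one must keep the diagram alternating and of minimal crossing number throughout the isotopies and keep track of the flypes that are used.

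Once $h$ preserves $S^2$ and $\gamma$, write $S^2=D_1\cup_\gamma D_2$; since $\Phi$ swaps the two sides one has $h(D_1)=D_2$, so $h$ carries the tangle diagram $t_1=K\cap D_1$ onto $t_2=K\cap D_2$, and as $h$ reverses the orientation of $S^3$ the tangle $t_2$ is a mirror image of $t_1$. The whole picture is then governed by $h|_\gamma$, a finite order homeomorphism of the circle permuting the four points of $\gamma\cap K$, hence, up to conjugacy, an element of the dihedral group $D_4$. Because $h$ exchanges the two disks bounded by $\gamma$, the restriction $h|_\gamma$ either reverses the orientation of $\gamma$, and then $h$ is a reflection-type involution near $\gamma$ and $K$ is $t_1$ glued directly to its mirror across $\gamma$ --- the Type I pattern of Figure 5 --- or it preserves the orientation of $\gamma$, so that $h|_\gamma$ is a rotation of the four marked points, $h$ is a rotatory reflection, and $t_1$ is glued to its mirror with a turn along $\gamma$ --- the Type II pattern of Figure 6.

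It remains to pin the local picture to the exact normal forms. Here I would run the short case analysis over the elements of $D_4$ and invoke the alternating condition, which fixes --- up to flypes performed inside $t_1$ --- the over/under pattern of the four strands meeting $\gamma$, and hence the way $t_2=h(t_1)$ must be attached so that the whole diagram stays alternating and of minimal crossing number; this leaves precisely the two configurations of Figures 5 and 6. Finally, the $+$ versus $-$ achiral cases are distinguished by whether the realised symmetry has order $4$ (Type II) or order $2$ (Type I), which is exactly why the statement is formulated up to a global change of orientation of $K$.
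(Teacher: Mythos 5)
This theorem is not proved in the paper at all: it is quoted verbatim as ``Theorem 5 in \cite{EQW}'' and serves only as background recalled in the appendix, so there is no in-paper argument to compare yours against. Judged on its own terms, your outline is a reasonable reconstruction of what such a proof must do --- induce $\Phi$ on the structure tree, realise the symmetry equivariantly on a minimal projection preserving $\gamma$, and then classify the possible actions on the four points of $\gamma\cap K$ to obtain the two normal forms. The difficulty is that the step you yourself identify as ``the principal difficulty'' --- promoting the combinatorial symmetry of $A(K)$ to an actual finite-order symmetry of a minimal alternating diagram with $h(\gamma)=\gamma$, using the Menasco--Thistlethwaite flyping theorem while keeping the diagram alternating and minimal --- is exactly where all the content of the theorem lies, and you only announce a plan for it rather than carry it out. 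In particular, you do not explain how to resolve the ambiguity coming from flypes (the symmetry a priori only sends the chosen minimal projection to \emph{some} minimal projection related by a sequence of flypes), nor why one can always flype into an equivariant position; this is the heart of \cite{EQW} and cannot be waved through.

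Two smaller points. First, your dihedral case analysis near $\gamma$ conflates whether $h$ preserves or reverses the orientation of the projection sphere with whether $h|_\gamma$ preserves or reverses the orientation of $\gamma$; since $h$ reverses the orientation of $S^3$, these two are linked, and the correct dichotomy (reflection through the projection sphere versus rotatory reflection) needs to be stated with care before Types I and II can be read off. Second, ``up to a global change of orientation'' in the statement refers to reversing the orientation of the knot $K$, not to separating the $+$ and $-$ achiral cases by the order of the symmetry; your last sentence misreads the role of that hypothesis. As submitted, the proposal is a credible strategy but not a proof.
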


\section*{Acknowledgments} 
We  thank Le Fonds National Suisse de la Recherche Scientifique for its  support.
\\
We thank Fran\c coise Michel for useful conversations.

\end{document}